\newtheorem{theorem}{Theorem}[section]
\newtheorem{proposition}[theorem]{Proposition}
\newtheorem{lemma}[theorem]{Lemma}
\numberwithin{equation}{section}
\begin{document}

\baselineskip=15.5pt

\title[Brauer group of the moduli spaces of stable vector bundles]{Brauer group of
the moduli spaces of stable vector bundles of fixed determinant over a smooth curve}

\author[I. Biswas]{Indranil Biswas}

\address{School of Mathematics, Tata Institute of Fundamental
Research, Homi Bhabha Road, Mumbai 400005, India}

\email{indranil@math.tifr.res.in}

\author[T. Sengupta]{Tathagata Sengupta}

\address{Department of Mathematics, University of Hyderabad, C. R. Rao Road,
Hyderabad 500046, India}

\email{tsengupta@gmail.com}

\subjclass[2010]{14H60, 14F22, 14D23, 53C08}

\keywords{Moduli space, Brauer group, $\mu_r$-gerbe, Picard group.}

\date{}

\begin{abstract}
Let $X$ be an irreducible smooth projective curve, defined over an algebraically closed 
field $k$, of genus at least three and $L$ a line bundle on $X$. Let ${\mathcal 
M}_X(r,L)$ be the moduli space of stable vector bundles on $X$ of rank $r$ and 
determinant $L$ with $r\, \geq\, 2$. We prove that the Brauer group ${\rm 
Br}(\mathcal{M}_X(r,L))$ is cyclic of order ${\rm g.c.d.}(r,{\rm degree}(L))$. We also 
prove that ${\rm Br}(\mathcal{M}_X(r,L))$ is generated by the class of the projective 
bundle obtained by restricting the universal projective bundle. These results were 
proved earlier in \cite{BBGN} under the assumption that $k\,=\,\mathbb C$.
\end{abstract}

\maketitle

\section{Introduction}

Let $X$ be a compact connected Riemann surface of genus $g$, with $g\, \geq\, 3$. 
Fix a holomorphic line bundle $L$ over $X$ and also fix an integer $r\, \geq\, 2$. 
Let ${\mathcal M}_X(r,L)$ denote the moduli space of stable vector bundles on $X$ of 
rank $r$ and determinant $L$, which is a smooth quasiprojective complex variety of 
dimension $(r^2-1)(g-1)$. There is a Poincar\'e vector bundle over $X\times 
{\mathcal M}_X(r,L)$ if and only if $r$ and $\text{degree}(L)$ are coprime 
\cite{Ra}. When $r$ and $\text{degree}(L)$ are coprime, any two Poincar\'e vector 
bundle over $X\times {\mathcal M}_X(r,L)$ differ by tensoring with a line bundle 
pulled back from ${\mathcal M}_X(r,L)$. Hence the projectivized Poincar\'e bundle in 
unique. Even when $r$ and $\text{degree}(L)$ are not coprime, there is a unique 
projective Poincar\'e bundle over $X\times {\mathcal M}_X(r,L)$, although it is not 
a projectivization of a vector bundle.

In \cite{BBGN} it was proved that the Brauer group of ${\mathcal M}_X(r,L)$ is 
cyclic of order ${\rm g.c.d.}(r,{\rm degree}(L))$. As mentioned above, there is a 
universal projective bundle $\mathcal P$ on $X\times {\mathcal M}_X(r,L)$. Fixing a 
point $x\, \in\, X$, let ${\mathcal P}_x$ be the projective bundle on ${\mathcal 
M}_X(r,L)$ obtained by restricting $\mathcal P$ to $\{x\}\times {\mathcal 
M}_X(r,L)$. In \cite{BBGN} it was also shown that the Brauer group 
$\text{Br}({\mathcal M}_X(r,L))$ is generated by the class of ${\mathcal P}_x$.

Our aim here is to prove these results for all algebraically closed fields;
see Theorem \ref{thm1}.

The computation in \cite{BBGN} crucially uses the calculation of the Picard group of 
${\mathcal M}_X(r,L)$. It may be mentioned that the assumption in \cite{DN} that the 
characteristic of the base field is zero is used in the computation of the Picard 
group of the moduli space ${\mathcal M}_X(r,L)$. In particular, the Reynolds' 
operators, which play a crucial role in the computation, are valid only in 
characteristic zero. A recent theorem of Hoffmann shows that the Picard group of the 
moduli space does not depend on the base field \cite{Ho}. The proof of
Theorem \ref{thm1} follows the strategy of \cite{BBGN}; some details not
given in \cite{BBGN} are given here.

\section{Universal projective bundle and Brauer group}

Let $k$ be an algebraically closed field. Let $X$ be an irreducible smooth
projective curve, defined over $k$, of genus $g$, with $g\, \geq\, 3$. Fix
an integer $r\, \geq\, 2$ and also fix a line bundle $L$ over $X$. The degree
of $L$ will be denoted by $d$. Let ${\mathcal M}_X(r,d)$ be the
moduli space of stable vector bundles on $X$ of rank $r$ and degree $d$.
Consider the morphism
$$
\phi\, :\, {\mathcal M}_X(r,d)\, \longrightarrow\, \text{Pic}^d(X)\, ,
\ \ \ E\, \longmapsto\, \bigwedge\nolimits^r E\, .
$$
Let
$$
{\mathcal M}_X\,=\, {\mathcal M}_X(r,L)\, :=\, \phi^{-1}(L)
$$
be the fiber of $\phi$ over the point $L\, \in\, \text{Pic}^d(X)$. This moduli
space ${\mathcal M}_X$ is canonically identified with the following two moduli
spaces:
\begin{enumerate}
\item moduli space of pairs of the form $(E,\, \xi)$, where $E$ is a
stable vector bundle over $X$ of rank $r$, and $\xi\, :\, \bigwedge^r E
\, \longrightarrow\, L$ is an isomorphism, and

\item the moduli space of stable vector bundles $E$ on $X$ of rank $r$ such that
$\bigwedge^r E$ is isomorphic to $L$
\end{enumerate}
(see \cite[p.~1308, Proposition 2.1]{Ho}).

It is known that there is a universal projective bundle
\begin{equation}\label{bp}
{\mathcal P}\, \longrightarrow\, X\times {\mathcal M}_X
\end{equation}
(\cite{Ra}, \cite{Ne}).
This follows from the construction of the moduli space and the fact that
the global automorphisms of a stable vector bundle are nonzero constant
scalar multiplications; this projective bundle $\mathcal P$ is described in the proof of
Proposition \ref{prop1}. Fix a closed point $x\, \in\, X$. Let
\begin{equation}\label{e1}
{\mathcal P}_x\,:=\, {\mathcal P}\vert_{\{x\}\times {\mathcal M}_X}
\, \stackrel{f}{\longrightarrow}\, {\mathcal M}_X
\end{equation}
be the restriction of ${\mathcal P}$ to $\{x\}\times {\mathcal M}_X$.

For any quasiprojective variety $Y$ defined over the field $k$, the Brauer group 
$\text{Br}(Y)$ of $Y$ is defined to be the Morita equivalence classes of Azumaya 
algebras over the variety $Y$. It is known that this Brauer group $\text{Br}(Y)$ 
coincides with the equivalence classes of all principal $\text{PGL}_k$--bundles over 
$Y$, where two principal $\text{PGL}_k$--bundles $P$ and $Q$ are identified if there 
are two vector bundles $V_1$ and $V_2$ over $Y$ satisfying the condition that the two 
principal $\text{PGL}_k$--bundles $P\otimes {\mathbb P}(V_1)$ and $Q\otimes {\mathbb 
P}(V_2)$ are isomorphic. The addition of two projective bundles $P$ and $Q$ in the
Brauer group $\text{Br}(Y)$ is defined to be the equivalence class of the projective bundle
$P\otimes Q$. The inverse of a projective bundle $P$ in $\text{Br}(Y)$ is the
equivalence class of the dual projective bundle $P^*$. (See \cite{Gr}, \cite{Gr2},
\cite{Gr3}, \cite{Mi}, \cite{Ga} for properties of Brauer groups.)
The cohomological Brauer group $\text{Br}'(Y)$ of the variety 
$Y$ is the torsion part of the \'etale cohomology group $H^2_{et}(Y,\, {\mathbb G}_m)$. 
There is a natural injective homomorphism $\text{Br}(Y)\,\longrightarrow\, 
\text{Br}'(Y)$ which is in fact an isomorphism by a theorem of Gabber \cite{dJ}, 
\cite{Hoo}.

\begin{proposition}\label{prop1}
The Brauer group ${\rm Br}(\mathcal{M}_X)$ is generated by the
class $cl({\mathcal P}_x)\, \in\, {\rm Br}(\mathcal{M}_X)$ of the projective
bundle ${\mathcal P}_x$ defined in \eqref{e1}.
\end{proposition}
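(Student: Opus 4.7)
The plan is to analyze the projective bundle $f\,:\,\mathcal{P}_x \,\longrightarrow\, \mathcal{M}_X$ via the Leray spectral sequence in \'etale cohomology with coefficients in $\mathbb{G}_m$. Since $f$ is, \'etale-locally, a trivial $\mathbb{P}^{r-1}$-bundle, the sheaves $R^{1}f_{*}\mathbb{G}_{m}\,=\,\underline{\mathbb{Z}}$ and $R^{2}f_{*}\mathbb{G}_{m}\,=\,0$ (using that the Brauer group of $\mathbb{P}^{r-1}$ over an algebraically closed field vanishes), and the associated five-term exact sequence reads
$$
\text{Pic}(\mathcal{M}_X) \,\longrightarrow\, \text{Pic}(\mathcal{P}_x) \,\longrightarrow\, \mathbb{Z} \,\stackrel{\partial}{\longrightarrow}\, \text{Br}(\mathcal{M}_X) \,\stackrel{f^{*}}{\longrightarrow}\, \text{Br}(\mathcal{P}_x).
$$
The standard identification for Brauer-Severi schemes gives $\partial(1) \,=\, cl(\mathcal{P}_x)$, so $\text{image}(\partial) \,=\, \langle cl(\mathcal{P}_x)\rangle \,=\, \ker(f^{*})$; thus the proposition is equivalent to the vanishing $f^{*}\,=\,0$ on Brauer groups.

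To establish $f^{*}\,=\,0$ I would use the GIT construction of $\mathcal{P}$ itself, which is the description promised just before the proposition. Pick $n \,\gg\, 0$ so that $E(n)$ is globally generated with $H^{1}(X,E(n))\,=\,0$ for every $E \,\in\, \mathcal{M}_X$; set $N\,=\,\chi(E(n))$ and let $Q$ be the locus in the Quot scheme $\text{Quot}_{X}(\mathcal{O}_X(-n)^{\oplus N})$ of quotients $q\,:\,\mathcal{O}_X(-n)^{\oplus N}\,\twoheadrightarrow\,E$ with $E$ stable, $\det E\,\cong\,L$, and $H^{0}(q(n))$ an isomorphism. The $\text{GL}_N$-action on $Q$ has scalar stabilizers (since stable bundles have only scalar automorphisms), the GIT quotient is $\mathcal{M}_X$, and the projectivization of the universal quotient $\mathcal{E}$ on $X\times Q$ descends to $\mathcal{P}$. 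Given an Azumaya algebra $\mathcal{A}$ on $\mathcal{M}_X$, one pulls $\mathcal{A}$ back via the cover by $Q$: the pullback acquires an $\text{End}(V)$ structure (for some bundle $V$ on the preimage), and the failure of $V$ to be $\text{GL}_N$-equivariant is encoded by a $\mathbb{G}_m$-character of the scalar centre. Over $\mathcal{P}_x$, the tautological line $\ell\,\subset\,\mathcal{E}_x$ (built into the moduli-theoretic description of $\mathcal{P}_x$ as parameterizing pairs $(E,\ell)$) supplies a canonical 1-dimensional $\mathbb{G}_m$-representation; tensoring $V$ with a suitable power of $\ell$ produces a genuinely equivariant bundle, and descending yields a trivialization of $f^{*}\mathcal{A}$ in $\text{Br}(\mathcal{P}_x)$.

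The principal obstacle is the descent bookkeeping in the last step: one must verify that the only $\mathbb{G}_m$-character arising in the above trivialization is the one whose descent to $\mathcal{M}_X$ yields a multiple of $cl(\mathcal{P}_x)$, and that no additional twists enter. This reduces to knowing $\text{Pic}(\mathcal{M}_X)\,=\,\mathbb{Z}$, so that there is but a single free character-weight to track. In characteristic zero this Picard computation is the Drezet-Narasimhan theorem \cite{DN}, which relies on Reynolds operators; here it is replaced by Hoffmann's characteristic-free theorem \cite{Ho}, and this is precisely the input that allows the argument of \cite{BBGN} to be carried over to arbitrary algebraically closed $k$.
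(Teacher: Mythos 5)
Your opening reduction is correct, and it coincides with how the paper finishes: the kernel of $f^*\,:\,{\rm Br}(\mathcal{M}_X)\,\longrightarrow\,{\rm Br}(\mathcal{P}_x)$ is the cyclic subgroup generated by $cl(\mathcal{P}_x)$ (the paper cites this as Gabber's theorem \cite[p.~193, Theorem~2]{Ga}; your Leray derivation is an equivalent route), so the whole content is the vanishing $f^*\,=\,0$. The genuine gap is in your proof of that vanishing. The assertion that the pullback of an \emph{arbitrary} Azumaya algebra $\mathcal{A}$ on $\mathcal{M}_X$ to the Quot-scheme locus $Q$ ``acquires an ${\rm End}(V)$ structure'' is exactly the claim that the pullback homomorphism ${\rm Br}(\mathcal{M}_X)\,\longrightarrow\,{\rm Br}(Q)$ is zero, and nothing in your argument establishes this. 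Worse, the claim is circular: the classes that visibly die on $Q$ are the multiples of the Brauer class of the ${\rm PGL}_N$--torsor $Q\,\longrightarrow\,\mathcal{M}_X$, and that class is precisely $cl(\mathcal{P}_x)$ (the universal quotient on $X\times Q$ is an honest vector bundle lifting the twisted universal bundle, which is why $cl(\mathcal{P}_x)$ pulls back to zero there). So assuming that every $\mathcal{A}$ trivializes on $Q$ amounts to assuming ${\rm Br}(\mathcal{M}_X)\,=\,\langle cl(\mathcal{P}_x)\rangle$, which is the proposition itself. To break the circularity you would need an independent proof that ${\rm Br}(Q)\,=\,0$, and no soft argument gives this: $Q$ is not an open subset of a projective space, and its Quot-scheme compactification is neither known to be smooth nor known to have vanishing Brauer group, so purity does not apply.

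This is exactly where the paper makes a different, and essential, choice of atlas. After twisting so that $d/r\,>\,2g-1$, every (semi)stable bundle with determinant $L$ sits in an extension $0\,\to\,\mathcal{O}_X^{\oplus(r-1)}\,\to\,E\,\to\,L\,\to\,0$, and $\mathcal{M}_X$ is realized as a GIT quotient $\theta\,:\,\mathcal{U}\,\to\,\mathcal{M}_X$ of an open subset $\mathcal{U}$ of the projective space $\mathbb{P}\bigl((H^1(X,L^*)^{\oplus(r-1)})^*\bigr)$ of extensions. For \emph{this} atlas one can prove ${\rm Br}(\mathcal{U})\,=\,0$: by \cite[Proposition~7.13]{DN} (valid for $g\,\geq\,3$, in any characteristic) one gets that the complement of $\mathcal{U}$ has codimension at least two, and cohomological purity then forces ${\rm Br}(\mathcal{U})\,=\,{\rm Br}(\mathbb{P}^N)\,=\,0$. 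The paper then constructs a rational lift $\varphi\,:\,\mathcal{W}\,\to\,\mathcal{U}$ of $f$ on a dense open $\mathcal{W}\,\subset\,\mathcal{P}_x$ (here your idea of tensoring by the tautological line bundle on $\mathcal{P}_x$ to cancel the central character does appear, namely in building a vector bundle $\mathcal{V}$ on $X\times\mathcal{P}_x$ with $\mathbb{P}(\mathcal{V})\,=\,F^*\mathcal{P}$), so that $f^*$ followed by restriction to $\mathcal{W}$ factors through ${\rm Br}(\mathcal{U})\,=\,0$; injectivity of restriction to a dense open then gives $f^*\,=\,0$. Finally, your closing remark misplaces the Picard input: Proposition~\ref{prop1} uses the Drezet--Narasimhan statement ${\rm Pic}(\mathcal{U})\,=\,\mathbb{Z}$ for the codimension bound on $\mathcal{U}^c$, whereas Hoffmann's characteristic-free theorem about the moduli space enters only later, in Lemma~\ref{lem1}, to compute the \emph{order} of $cl(\mathcal{P}_x)$.
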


\begin{proof}
Given any line bundle $L_0$ on $X$, the morphism
$$
{\mathcal M}_X\,=\, {\mathcal M}_X(r,L)\,\longrightarrow\,
{\mathcal M}_X(r,L\otimes L^r_0)\, ,\ \ E\, \longmapsto\, E\otimes L_0
$$
is an isomorphism. The natural isomorphism of ${\mathbb P}(E\otimes L_0)$ with
${\mathbb P}(E)$ produces an isomorphism between the universal projective bundles
over $X\times {\mathcal M}_X(r,L)$ and $X\times {\mathcal M}_X(r,L\otimes L^r_0)$.
Therefore, after tensoring with a line bundle $L_0$ of sufficiently large
degree, we may assume that
$$
\frac{d}{r} \,>\, 2g-1\, .
$$
Let $\overline{\mathcal M}_X$ denote the moduli space of semistable vector bundles
$E$ on $X$ of rank $r$ with $\bigwedge^r E\,=\, L$.

The cotangent bundle of $X$ will be denoted by $K_X$.
For any vector bundle $E\, \in\, \overline{\mathcal M}_X$ and any point $y\, \in\, X$,
$$
H^1(Y,\, E\otimes {\mathcal O}_X(-y))\,=\,H^0(Y,\, E^*\otimes K_X
\otimes {\mathcal O}_X(y))^*\,=\, 0
$$
because ${\rm degree}(E^*\otimes K_X\otimes{\mathcal O}_X(y))\, <\, 0$
and $E^*\otimes K_X\otimes {\mathcal O}_X(y)$ is semistable. So from the
long exact sequence of cohomologies associated to the short exact sequence
$$
0\,\longrightarrow\,E\otimes {\mathcal O}_X(-y)\,\longrightarrow\, E
\,\longrightarrow\, E_y \,\longrightarrow\, 0
$$
it follows that the evaluation homomorphism $H^0(X,\, E)\,\longrightarrow\, E_y$
is surjective; hence $E$ is generated by its global sections.

Take any $E\,\in\, \overline{\mathcal M}_X$. Since the vector bundle $E$ is
generated by its global sections, there is a short exact sequence 
\begin{equation}\label{req}
0\,\longrightarrow\, \mathcal{O}^{\oplus (r-1)}_X\,\longrightarrow\, E\,
\longrightarrow\,\bigwedge\nolimits^r E\,=\,L\,\longrightarrow\, 0\, .
\end{equation}
This short exact sequence
does not split because $E$ is semistable and $\text{degree}(L)\, >\,0$.
All such nontrivial extensions are parameterized by
$$\mathbb{P}(H^1(X, {\rm Hom}(L,\mathcal{O}^{\oplus(r-1)}_X))^*)\,=\,
\mathbb{P}((H^1(X,L^*)^*\otimes_k k^{\oplus (r-1)}) \,=\, \mathbb{P}((H^1(X,L^*)^{
\oplus (r-1)})^*)\, .$$
The standard action of $\text{GL}(r-1,{k})$ on $k^{\oplus (r-1)}$ produces an action of
$\text{GL}(r-1,{k})$ on the projective space $\mathbb{P}((H^1(X,L^*)^{\oplus (r-1)})^*)$. The
moduli space $\overline{\mathcal M}_X$ is the geometric invariant theoretic quotient
$$
\mathbb{P}((H^1(X,L^*)^{r-1})^*)/\!\!/\text{GL}(r-1,{k})\,=\,
\mathbb{P}((H^1(X,L^*)^{r-1})^*)/\!\!/\text{PGL}(r-1,{k})\,=\,
\overline{\mathcal M}_X
$$
(see \cite{Ne}, \cite{DN}).

The tautological line bundle ${\mathcal O}_{\mathbb{P}((H^1(X,L^*)^{r-1})^*)}(1)$
on $\mathbb{P}((H^1(X,L^*)^{r-1})^*)$ will be denoted by ${\mathcal L}_0$. Let
$$
p_1\, :\, X\times\mathbb{P}((H^1(X,L^*)^{r-1})^*)\,\longrightarrow\,X\, ,\ \
p_2\, :\, X\times\mathbb{P}((H^1(X,L^*)^{r-1})^*)\,\longrightarrow\,
\mathbb{P}((H^1(X,L^*)^{r-1})^*)
$$
be the natural projections. There is a universal extension over
$X\times\mathbb{P}((H^1(X,L^*)^{r-1})^*)$
\begin{equation}\label{a1}
0 \,\longrightarrow\, (p^*_1 \mathcal{O}^{r-1}_X)\otimes p^*_2{\mathcal L}_0
\,\longrightarrow\, {\mathcal E} \,\longrightarrow\, p^*_1 L\,\longrightarrow\, 0\, .
\end{equation}
Let ${\mathcal U}\, \subset\, \mathbb{P}((H^1(X,L^*)^{r-1})^*)$ be the subset
defined by all points
$t\, \in\, \mathbb{P}((H^1(X,L^*)^{r-1})^*)$ such that the vector bundle ${\mathcal
E}\vert_{X\times\{t\}}$ is stable. This subset $\mathcal U$ is nonempty Zariski open. Let
\begin{equation}\label{t}
\theta\,:\, {\mathcal U}\,\longrightarrow\,{\mathcal U}/\!\!/ \text{PGL}(r-1,{k})\,=\,
{\mathcal M}_X
\end{equation}
be the quotient map. Consider the action of
$\text{PGL}(r-1,{k})$ on $X\times \mathbb{P}((H^1(X,L^*)^{r-1})^*)$ given by the
trivial action on $X$ and the above action of $\mathbb{P}((H^1(X,L^*)^{r-1})^*)$.
This action lifts to an action of
$\text{PGL}(r-1,{k})$ on ${\mathbb P}({\mathcal E})$. The corresponding
geometric invariant theoretic quotient
$$
{\mathcal P}\, :=\, 
({\mathbb P}({\mathcal E})\vert_{X\times{\mathcal U}})/\!\!/\text{PGL}(r-1,{k})
$$
is the universal projective bundle on $X\times {\mathcal M}_X$ (see \eqref{bp}).

Consider the map
\begin{equation}\label{F}
F\,:=\, \text{Id}_X\times f\, :\, X\times {\mathcal P}_x\,\longrightarrow\,
X\times{\mathcal M}_X\, ,
\end{equation}
where $f$ is the projection in \eqref{e1}. We will construct a vector bundle
$$
{\mathcal V} \,\longrightarrow\, X\times {\mathcal P}_x
$$
with the property that ${\mathbb P}({\mathcal V})\,=\, F^*{\mathcal P}$.

Let ${\mathcal E}_x\,:=\, {\mathcal E}\vert_{\{x\}\times {\mathcal 
U}}\,\longrightarrow\, {\mathcal U}$ be the vector bundle obtained by restricting 
$\mathcal E$ in \eqref{a1} to $\{x\}\times {\mathcal U}$. Let
$$
{\mathcal Q}\,:=\, {\mathbb 
P}({\mathcal E}_x)\, \stackrel{\beta'}{\longrightarrow}\, {\mathcal U}
$$ be the 
corresponding projective bundle. Define
$$
\beta\, :=\, \text{Id}_X\times\beta'\, :\, 
X\times {\mathcal Q} \,\longrightarrow\, X\times {\mathcal U}\, ,
$$
and consider the pulled back vector bundle
$$
\widetilde{\mathcal{E}}\,:=\, (\beta^*\mathcal{E})\otimes (q^*_2
{\mathcal O}_{\mathcal Q}(-1)) \,\longrightarrow\,X\times {\mathcal Q}\, ,
$$
where $q_2\, :\, X\times {\mathcal Q}\,\longrightarrow\,{\mathcal Q}$ is the natural
projection, and $${\mathcal O}_{\mathcal Q}(1)\,\longrightarrow\,
{\mathbb P}({\mathcal E}_x)\,=\,
{\mathcal Q}$$ is the tautological line bundle. For the natural action of $\text{GL}(r-1,
k)$ on $\widetilde{\mathcal{E}}$, the center ${\mathbb G}_m$ of $\text{GL}(r-1,{k})$
acts trivially on $\widetilde{\mathcal{E}}$. Consequently, the geometric invariant
theoretic quotient
$$
{\mathcal V} \,:=\, \widetilde{\mathcal{E}}/\!\!/\text{GL}(r-1,{k}) \,
{\longrightarrow}
\, X\times({\mathcal Q}/\!\!/\text{GL}(r-1,{k}))\,=\, X\times {\mathcal P}_x
$$
is a vector bundle. It is straight-forward to check that
\begin{itemize}
\item ${\mathbb P}({\mathcal V})\,=\, F^*{\mathcal P}$, where $F$ is the map in
\eqref{F}, and

\item for each point $y\, \in\,{\mathcal P}_x$, the vector bundle
${\mathcal V}\vert_{X\times\{y\}}$ on $X$ lies in the isomorphism class of
vector bundles associated to the point $f(y)\, \in\,{\mathcal M}_X$, where
$f$ is defined in \eqref{e1}.
\end{itemize}

Let $B\, :\, X\times {\mathcal P}_x\,\longrightarrow\,{\mathcal P}_x$
be the natural projection. 
Consider the direct image $B_*{\mathcal V}\, \longrightarrow\, {\mathcal P}_x$.
Let $Z\,\subset\, {\mathcal P}_x$ be a nonempty Zariski open subset
such that the restriction $(B_*{\mathcal V})\vert_Z$ is a trivial vector bundle.
Fix a trivialization of $(B_*{\mathcal V})\vert_Z$. Take a point $y_0\, \in\,
Z$ and choose $r-1$ linearly independent sections $$s_1,\, \cdots ,\, s_{r-1}\, \in\,
H^0(X\times\{y_0\},\, {\mathcal V}\vert_{X\times\{y_0\}})$$
such that the coherent subsheaf of ${\mathcal V}\vert_{X\times\{y_0\}}$ generated by
$s_1,\, \cdots ,\, s_{r-1}$ is a subbundle of ${\mathcal V}\vert_{X\times\{y_0\}}$
of rank $r-1$; we note that from \eqref{req} it follows immediately that such
$r-1$ linearly independent sections exist. Extend each $s_i$ to a section $\widetilde{s}_i$
of ${\mathcal V}\vert_{X\times Z}$ using the above trivialization of
$(B_*{\mathcal V})\vert_Z$. There is a Zariski open subset $Z'\, \subset\, Z$ containing
$y_0$ such that the coherent subsheaf of ${\mathcal V}\vert_{X\times Z}$ generated by
$\widetilde{s}_1,\, \cdots ,\, \widetilde{s}_{r-1}$ is a subbundle of
${\mathcal V}\vert_{X\times Z'}$ of rank $r-1$. Note that this subbundle
over $X\times Z'$ is trivial and a trivialization is given by the images
of $\widetilde{s}_1,\, \cdots ,\, \widetilde{s}_{r-1}$. Therefore, on $X\times Z'$, we have a
short exact sequence of vector bundles
$$
0 \,\longrightarrow\, {\mathcal O}^{\oplus (r-1)}_{X\times Z'}
\,\longrightarrow\, {\mathcal V}\vert_{X\times Z'} \,\longrightarrow\, L'
\,\longrightarrow\, 0\, ,
$$
where $L'$ is a line bundle on $X\times Z'$. Considering the top exterior products
it follows that for each point $y\, \in\, Z'$,
the restriction $L'\vert_{X\times\{y\}}$ is isomorphic to the line bundle $L$.
Now from the seesaw theorem (see \cite[p.~51, Corollary~6]{Mu}) it follows that
there is a line bundle $L''$ on $Z'$ such that the line bundle $L'\otimes B^*L''$ 
on $X\times Z'$ is isomorphic to the pullback of $L$ to $X\times Z'$. We may trivialize
$L''$ over suitable nonempty Zariski open subsets of $Z'$. Therefore, it follows 
that there is a nonempty Zariski open subset
\begin{equation}\label{i}
\iota\, :\, {\mathcal W}\, \hookrightarrow\, Z' \, \subset\, {\mathcal P}_x
\end{equation}
such that the restriction $L'\vert_{X\times\mathcal W}$ is isomorphic to
the pullback of $L$ to $X\times\mathcal W$.

Consequently, there is a morphism
$$
\varphi\, :\, {\mathcal W}\,\longrightarrow\, {\mathcal U}\, \subset\,
\mathbb{P}((H^1(X,L^*)^{r-1})^*)
$$
such that the following diagram is commutative
\begin{equation}\label{c}
\begin{matrix}
{\mathcal W}& \stackrel{\varphi}{\longrightarrow}& {\mathcal U}\\
~ \Big\downarrow \iota && ~\Big\downarrow\theta \\
{\mathcal P}_x& \stackrel{f}{\longrightarrow}& {\mathcal M}_X
\end{matrix}
\end{equation}
where $\theta$, $\iota$ and $f$ are the morphisms in \eqref{t}, \eqref{i} and
\eqref{e1} respectively.

The codimension of the complement $$\mathbb{P}((H^1(X,L^*)^{r-1})^*)\setminus
{\mathcal U}\, \subset\,\mathbb{P}((H^1(X,L^*)^{r-1})^*)$$ is at least two.
To prove this, note that $\text{Pic}({\mathcal U})\,=\, \mathbb Z$ \cite[p.~89,
Proposition 7.13]{DN} (here we need the assumption that $g\, \geq\, 3$); this immediately
implies that the codimension of the complement ${\mathcal U}^c$ is at least two. Since the
Brauer group of a projective space is zero, in view of this codimension estimate,
it follows from the ``Cohomological purity'' \cite[p.~241, Theorem~VI.5.1]{Mi} (it also
follows from \cite[p.~292--293]{Gr}) that
\begin{equation}\label{bz}
{\rm Br}({\mathcal U})\, =\, 0\, .
\end{equation}
{}From the commutativity of \eqref{c} we conclude that the pullback homomorphism
$$
\iota^*\circ f^* \, =\, (f\circ \iota)^* \, :\, \text{Br}({\mathcal M}_X)
\,\longrightarrow\, \text{Br}({\mathcal W})
$$
coincides with the homomorphism $\varphi^*\circ \theta^*\, :\, \text{Br}({\mathcal M}_X) 
\,\longrightarrow\, \text{Br}({\mathcal W})$. On the other hand, from \eqref{bz}
we know that $\varphi^*\circ \theta^*\,=\, 0$. Hence
$$
\iota^*\circ f^* \, =\,0\, .
$$

On the other hand, the homomorphism
$$
\iota^*\, :\, \text{Br}({\mathcal P}_x)
\,\longrightarrow\, \text{Br}({\mathcal W})
$$
is injective because $\mathcal W$ is a Zariski open dense subset of
${\mathcal P}_x$ (see \cite[p.~142, Theorem~2.5]{Mi}). Consequently,
$$
f^* \,: \,\text{Br}({\mathcal M}_X)
\,\longrightarrow\,\text{Br}({\mathcal P}_x)
$$
is the zero homomorphism. On the other hand, the kernel of the above
homomorphism $f^*$ is generated by the class of ${\mathcal P}_x$
\cite[p.~193, Theorem~2]{Ga}. Therefore, we conclude that $\text{Br}({\mathcal M}_X)$ is
generated by the class of ${\mathcal P}_x$. This completes the proof.
\end{proof}

We will denote the integer ${\rm g.c.d.}(r,d)$ by $\delta$.

\begin{lemma}\label{lem1}
The order of the Brauer class $cl({\mathcal P}_x)\, \in\, {\rm Br}(\mathcal{M}_X)$
is $\delta$.
\end{lemma}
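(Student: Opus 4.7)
Since $\mathcal{P}_x$ is a $\text{PGL}_r$-bundle, the class $cl(\mathcal{P}_x)$ is automatically $r$-torsion. The plan is to sharpen this to the exact order $\delta = \gcd(r,d)$ by proving separately an upper bound $\delta\cdot cl(\mathcal{P}_x)=0$ and a matching lower bound asserting that no proper divisor of $\delta$ annihilates $cl(\mathcal{P}_x)$.

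For the upper bound, the goal is to construct a rank-$r^{\delta}$ vector bundle $W$ on $\mathcal{M}_X$ whose projectivization represents $\mathcal{P}_x^{\otimes \delta}$ in $\text{Br}(\mathcal{M}_X)$. Working on the atlas $\theta:\mathcal{U}\to\mathcal{M}_X$ from Proposition \ref{prop1}, one has $\theta^{*}\mathcal{P}_x=\mathbb{P}(\mathcal{E}_x)$. The obstruction to $\mathcal{E}_x$ (or any twist of it by an equivariant line bundle) descending to a genuine vector bundle on $\mathcal{M}_X$ is encoded in the action of the center $\mathbb{G}_m\subset\text{GL}(r-1,k)$ on $\mathcal{E}_x$, which takes two distinct weights determined by the extension \eqref{a1}. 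Using the B\'ezout relation $\delta=ar+bd$ for suitable integers $a,b$, one combines powers of the tautological line bundle $\mathcal{L}_0$ with the determinant character of $\text{GL}(r-1,k)$ to produce a $\text{GL}(r-1,k)$-equivariant line bundle $\mathcal{N}$ on $\mathcal{U}$ such that $\mathcal{E}_x^{\otimes \delta}\otimes\mathcal{N}$ acquires trivial central action; the resulting bundle then descends to the desired $W$ on $\mathcal{M}_X$.

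For the lower bound, one invokes the Kummer exact sequence
\[
0\longrightarrow \text{Pic}(\mathcal{M}_X)/r\,\text{Pic}(\mathcal{M}_X)\longrightarrow H^{2}_{et}(\mathcal{M}_X,\mu_r)\longrightarrow \text{Br}(\mathcal{M}_X)[r]\longrightarrow 0.
\]
By Hoffmann's theorem \cite{Ho}, $\text{Pic}(\mathcal{M}_X)\cong\mathbb{Z}$, so the leftmost group is $\mathbb{Z}/r\mathbb{Z}$, generated by the reduction of the ample Picard generator. The class $cl(\mathcal{P}_x)$ has a canonical lift $\widetilde{\alpha}\in H^{2}_{et}(\mathcal{M}_X,\mu_r)$, obtained from the $\mu_r$-gerbe of $\text{SL}_r$-liftings of $\mathcal{P}_x$. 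A direct calculation, using the GIT description above and the explicit description of the generator of $\text{Pic}(\mathcal{M}_X)$, identifies the image of $\widetilde{\alpha}$ modulo $\text{Pic}(\mathcal{M}_X)/r$ as an element of order exactly $\delta$: the smallest positive $n$ for which $n\widetilde{\alpha}$ lies in the image of $\text{Pic}(\mathcal{M}_X)/r$ is $n=\delta$, which forces the order of $cl(\mathcal{P}_x)$ to be at least $\delta$.

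The main obstacle is coordinating the upper and lower bound computations so that they meet precisely at $\delta$. Both require pinning down the exact relation between the ``virtual'' universal bundle, the divisibility class of $d$ modulo $r$, and the canonical generator of $\text{Pic}(\mathcal{M}_X)$. Hoffmann's arbitrary-characteristic Picard theorem and the identity $\delta=\gcd(r,d)$ are the two decisive inputs: the B\'ezout relation drives the explicit descent argument in the upper bound, while the Picard computation locks in the lower bound through the Kummer sequence.
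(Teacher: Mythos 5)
Both halves of your outline contain genuine gaps, and the upper-bound mechanism, as stated, is actually false. Your descent criterion --- twist $\mathcal{E}_x^{\otimes\delta}$ by a $\text{GL}(r-1,{k})$-equivariant line bundle $\mathcal{N}$ killing the central character, then descend along $\theta\,:\,\mathcal{U}\,\longrightarrow\,\mathcal{M}_X$ of \eqref{t} --- does not apply to this atlas, because $\theta$ is not a geometric quotient whose fibers are single $\text{PGL}(r-1,{k})$-orbits: a fixed stable bundle $E$ admits a positive-dimensional family of presentations \eqref{req}, one for each suitable $(r-1)$-dimensional subspace of $H^0(X,E)$, and these form many distinct orbits inside a single fiber of $\theta$. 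Hence Kempf-type descent (``stabilizers act trivially on fibers, so the bundle descends'') is simply not available here. Worse, the argument proves too much: the equivariant line bundles on $\mathcal{U}$ (powers of $\mathcal{L}_0$, whose central weight is $\pm 1$, twisted by characters of $\text{GL}(r-1,{k})$, whose central weights lie in $(r-1)\mathbb{Z}$) realize \emph{every} central weight, so your mechanism would equally well descend a twist of $\mathcal{E}_x$ itself and manufacture a Poincar\'e bundle on $\mathcal{M}_X$ for arbitrary $d$, contradicting Ramanan's theorem \cite{Ra} when $\delta>1$. Note also that $r$ and $d$ never enter these weights, so the B\'ezout relation $\delta=ar+bd$ has no purchase on this atlas; and the center does not act on $\mathcal{E}_x$ ``with two distinct weights'' --- an equivariant structure on the nonsplit extension \eqref{a1} forces equal central weights on sub and quotient. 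The weight/B\'ezout argument can be made to work, but only on an atlas (the Quot-scheme construction) where the stable locus is an honest free $\text{PGL}$-quotient and where one \emph{knows} that the equivariant line bundles have central weights lying in $r\mathbb{Z}+(d+r(1-g))\mathbb{Z}\,=\,\delta\mathbb{Z}$; that last fact is an equivariant Picard-group computation, i.e.\ exactly the stack-level input you have not supplied.

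The lower bound is where the proposal becomes circular. Via the Kummer sequence, the assertion ``the smallest positive $n$ with $n\widetilde{\alpha}$ in the image of $\text{Pic}(\mathcal{M}_X)/r$ is $n=\delta$'' \emph{is} the lemma; calling it ``a direct calculation'' is not a proof, and the inputs you actually invoke --- the Kummer sequence together with Hoffmann's computation of the coarse Picard group $\text{Pic}(\mathcal{M}_X)\,\cong\,\mathbb{Z}$ --- cannot decide it, since they identify $\text{Br}(\mathcal{M}_X)[r]$ as a quotient of $H^2_{et}(\mathcal{M}_X,\mu_r)$ but say nothing about where the particular class $\widetilde{\alpha}$ sits. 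What pins this down is the Picard group of the $\mu_r$-gerbe $\widetilde{\mathcal{M}}_X\,\longrightarrow\,\mathcal{M}_X$ given by the moduli stack of pairs $(E,\xi)$, together with its weight homomorphism: by \cite[Lemma~4.4]{BH} there is an exact sequence $0\,\longrightarrow\,\text{Pic}(\mathcal{M}_X)\,\longrightarrow\,\text{Pic}(\widetilde{\mathcal{M}}_X)\,\stackrel{\nu}{\longrightarrow}\,\mathbb{Z}/r\mathbb{Z}\,\stackrel{\alpha}{\longrightarrow}\,\text{Br}(\mathcal{M}_X)$ with $\alpha(1)\,=\,cl(\mathcal{P}_x)$, so the order of $cl(\mathcal{P}_x)$ equals the order of $1$ in $(\mathbb{Z}/r\mathbb{Z})/\text{image}(\nu)$, and Hoffmann's stack-level results (Lemmas 3.3 and 3.6 and Theorem 3.1 of \cite{Ho}) give $(\mathbb{Z}/r\mathbb{Z})/\text{image}(\nu)\,\cong\,\mathbb{Z}/\delta\mathbb{Z}$. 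This single computation yields your upper and lower bounds simultaneously; it is precisely the step your outline defers to an unexplained calculation, and without it neither bound is established.
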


\begin{proof}
Let $\widetilde{\mathcal M}_X\,=\, \widetilde{\mathcal M}_X(r,L)$ be the moduli stack
of pairs of the form $(E,\, \xi)$, where $E$ is a
stable vector bundle over $X$ of rank $r$ and $\xi\, :\, \bigwedge^r E
\, \longrightarrow\, L$ is an isomorphism. Let $\mu_r$ denote the kernel of the
homomorphism
$$
{\mathbb G}_m\, \longrightarrow\, {\mathbb G}_m\, ,\ \ z\, \longmapsto\, z^r\, .
$$
The natural morphism
$$
\gamma\, :\, \widetilde{\mathcal M}_X\,\longrightarrow\, {\mathcal M}_X
$$
makes $\widetilde{\mathcal M}_X$ a $\mu_r$--gerbe over ${\mathcal M}_X$. Let
$$
c_0\,\in\, H^2({\mathcal M}_X,\,\mu_r)
$$
be the class of this $\mu_r$--gerbe. The Brauer class
$$
cl({\mathcal P}_x)\, \in\, {\rm Br}(\mathcal{M}_X)\,=\, H^2({\mathcal M}_X,\,
{\mathbb G}_m)
$$
coincides with the image of $c_0$ under the homomorphism
$$
\eta\, :\, H^2({\mathcal M}_X,\,\mu_r)\, \longrightarrow\, H^2({\mathcal M}_X,\,
{\mathbb G}_m)
$$
given by the inclusion of $\mu_r$ in ${\mathbb G}_m$.

There is a short exact sequence
\begin{equation}\label{es1}
0\, \longrightarrow\, \text{Pic}(\mathcal{M}_X)
\, \stackrel{\gamma^*}{\longrightarrow}\, \text{Pic}(\widetilde{\mathcal M}_X)
\, \stackrel{\nu}{\longrightarrow}\, {\rm Hom}(\mu_r,\, {\mathbb G}_m)\,=\,
{\mathbb Z}/r{\mathbb Z} \, \stackrel{\alpha}{\longrightarrow}\,{\rm Br}(\mathcal{M}_X)
\end{equation}
\cite[p.~232, Lemma 4.4]{BH}, where $\nu$ sends a line bundle on the $\mu_r$--gerbe
$\widetilde{\mathcal M}_X$ to the weight associated
to the action of $\mu_r$ on it. The image
$$
\alpha(1)\, \in\, {\rm Br}(\mathcal{M}_X)\,=\, H^2({\mathcal M}_X,\,
{\mathbb G}_m)
$$
coincides with the image of the class of the $\mu_r$--gerbe $\widetilde{\mathcal M}_X$
under the above homomorphism $\eta$. From this it follows that
\begin{equation}\label{a}
\alpha(1)\,=\, cl({\mathcal P}_x)
\end{equation}
because $cl({\mathcal P}_x)$ also coincides with the image of the class of the
$\mu_r$--gerbe $\widetilde{\mathcal M}_X$ under the above homomorphism $\eta$.

{}From \cite[p.~1311, Lemma 3.6]{Ho}, \cite[p.~1311, Lemma 3.3]{Ho}
and \cite[p.~1310, Theorem 3.1]{Ho} it follows that
$$
{\mathbb Z}/\text{image}(\nu)\,=\, {\mathbb Z}/\delta{\mathbb Z}\, ,
$$
where $\nu$ is the homomorphism in \eqref{es1}. Therefore, from \eqref{es1} we conclude
that the order of $\alpha(1)\, \in\, {\rm Br}(\mathcal{M}_X)$ is $\delta$. Now
the lemma follows from \eqref{a}.
\end{proof}

Combining Proposition \ref{prop1} and Lemma \ref{lem1} we have:

\begin{theorem}\label{thm1}
The Brauer group ${\rm Br}(\mathcal{M}_X)$ is cyclic of order $\delta\,=\,
{\rm g.c.d.}(r,d)$. The group ${\rm Br}(\mathcal{M}_X)$ is generated by the
class $cl({\mathcal P}_x)\, \in\, {\rm Br}(\mathcal{M}_X)$
of the projective bundle ${\mathcal P}_x$ defined in \eqref{e1}.
\end{theorem}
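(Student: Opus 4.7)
The plan is to deduce Theorem \ref{thm1} as a direct formal consequence of Proposition \ref{prop1} and Lemma \ref{lem1}, with no additional geometric input. First, Proposition \ref{prop1} produces a surjection ${\mathbb Z}\,\longrightarrow\, {\rm Br}({\mathcal M}_X)$ sending $1\,\longmapsto\, cl({\mathcal P}_x)$, so ${\rm Br}({\mathcal M}_X)$ is a cyclic group. Next, Lemma \ref{lem1} identifies the order of the generator $cl({\mathcal P}_x)$ as $\delta\,=\, {\rm g.c.d.}(r,d)$. Putting these together, ${\rm Br}({\mathcal M}_X)\,\cong\, {\mathbb Z}/\delta{\mathbb Z}$, generated by $cl({\mathcal P}_x)$, which is exactly the content of the theorem.

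The only small point to verify is that the order of the generator in the abstract cyclic group equals the order of $cl({\mathcal P}_x)$ as computed in ${\rm Br}({\mathcal M}_X)$; this is automatic, since the generator has finite order $\delta$ in ${\rm Br}({\mathcal M}_X)$ (guaranteed by Gabber's theorem, which identifies ${\rm Br}({\mathcal M}_X)$ with the torsion subgroup of $H^2_{et}({\mathcal M}_X,\,{\mathbb G}_m)$), and any surjection from ${\mathbb Z}$ onto a cyclic group of known finite order $\delta$ has kernel $\delta{\mathbb Z}$.

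There is no main obstacle at this stage: the substantive work has already been done. Proposition \ref{prop1} absorbed the geometric construction of the universal extension ${\mathcal E}$ on $X\,\times\,\mathbb{P}((H^1(X,L^*)^{r-1})^*)$, the codimension-two estimate for the unstable locus, the vanishing ${\rm Br}({\mathcal U})\,=\,0$ via cohomological purity, and the theorem from \cite{Ga} identifying the kernel of $f^\ast$ with the subgroup generated by $cl({\mathcal P}_x)$. Lemma \ref{lem1} absorbed the $\mu_r$-gerbe analysis and, crucially, the appeal to Hoffmann's characteristic-free computation of ${\rm Pic}(\widetilde{\mathcal M}_X)$ and ${\rm Pic}({\mathcal M}_X)$ from \cite{Ho}, which is what lets the argument run over an arbitrary algebraically closed field rather than only over ${\mathbb C}$ as in \cite{BBGN}. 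Once those ingredients are available, Theorem \ref{thm1} follows in one line.
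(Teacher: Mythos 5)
Your proposal is correct and matches the paper exactly: the paper's entire proof of Theorem \ref{thm1} is the one-line observation that it follows by combining Proposition \ref{prop1} (which gives that ${\rm Br}(\mathcal{M}_X)$ is generated by $cl({\mathcal P}_x)$, hence cyclic) with Lemma \ref{lem1} (which gives that this generator has order $\delta$). Your extra remark about finiteness of the order via Gabber's theorem is a harmless, slightly redundant elaboration, since Lemma \ref{lem1} already computes the order to be exactly $\delta$.
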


\section*{Acknowledgements}

We thank the referee for helpful comments. The first-named author is supported by a 
J. C. Bose Fellowship. The second-named author would like to thank Tata Institute of 
Fundamental Research for its hospitality while the paper was being worked on.


\end{document}